\begin{document}

\title{Lipschitz functions on topometric spaces}

\author{Itaï \textsc{Ben Yaacov}}

\address{Itaï \textsc{Ben Yaacov} \\
  Université Claude Bernard -- Lyon 1 \\
  Institut Camille Jordan, CNRS UMR 5208 \\
  43 boulevard du 11 novembre 1918 \\
  69622 Villeurbanne Cedex \\
  France}

\urladdr{\url{http://math.univ-lyon1.fr/~begnac/}}

\thanks{Research supported by the Institut Universitaire de France and ANR contract GruPoLoCo (ANR-11-JS01-008).}

\svnInfo $Id: LipTM.tex 1497 2013-01-29 14:44:17Z begnac $
\thanks{\textit{Revision} {\svnInfoRevision} \textit{of} \today}

\keywords{topometric space ; Lipschitz function ; normal ; completely regular ; Urysohn's Lemma ; Stone-Čech compactification ; completion ; function space ; lattice}
\subjclass[2010]{54D15 ; 54E99 ; 46E05}

\begin{abstract}
  We study functions on topometric spaces which are both (metrically) Lipschitz and (topologically) continuous, using them in contexts where, in classical topology, ordinary continuous functions are used.
  We study the relations of such functions with topometric versions of classical separation axioms, namely, normality and complete regularity, as well as with completions of topometric spaces.
  We also recover a compact topometric space $X$ from the lattice of continuous $1$-Lipschitz functions on $X$, in analogy with the recovery of a compact topological space $X$ from the structure of (real or complex) functions on $X$.
\end{abstract}

\maketitle

\tableofcontents

\section*{Introduction}

Compact topometric spaces were first defined in \cite{BenYaacov-Usvyatsov:CFO} as a formalism for various global and local type spaces arising in the context of continuous first order logic, allowing for some kind of (topometric) Cantor-Bendixson analysis in spaces which, from a purely topological point of view, are possibly even perfect.
General topometric spaces (i.e., non compact) were defined studied further from an abstract point of view in \cite{BenYaacov:TopometricSpacesAndPerturbations}, where the formalism is shown to be useful for the analysis of perturbation structures on type spaces.
The same idea was also shown to be useful in the context of (very non compact) Polish groups, which may admit ``topometric ample generics'' even when no purely topological ample generics need exist, see \cite{BenYaacov-Berenstein-Melleray:TopometricGroups}.

In a nutshell, topometric spaces are spaces equipped both with a metric and a topology, \emph{which need not agree}.

\begin{dfn}
  \label{dfn:TopoMetric}
  A \emph{topometric space} is a triplet $(X,\sT,d)$, where $\sT$ is a topology and $d$ a metric on $X$, satisfying:
  \begin{enumerate}
  \item The distance function $d\colon X^2 \to [0,\infty]$ is lower semi-continuous in the topology.
  \item The metric refines the topology.
  \end{enumerate}
\end{dfn}

We follow the convention that unless explicitly qualified, the vocabulary of general topology (compact, continuous, etc.)\ refers to the topological structure, while the vocabulary of metric spaces (Lipschitz function, etc.)\ refers to the metric structure.
Excluded from this convention are separation axioms: we assimilate the lower semi-continuity of the distance function to the Hausdorff separation axiom, and stronger axioms, such as normality and complete regularity, will be defined for topometric spaces below.

We shall refer to two classes of examples, arising from the embedding of the categories of (Hausdorff) topological spaces and of metric spaces in the category of topometric spaces.
By a \emph{maximal} topometric space we mean one equipped with the discrete $0/\alpha$ distance for some $0 < \alpha \leq \infty$.
Such a space can be identified, for most intents and purposes, with its underlying pure topological structure.
In particular, every bounded function is Lipschitz, and if $\alpha = \infty$ then every function is $k$-Lipschitz for any $k$.
Similarly, a \emph{minimal} topometric space is one in which the metric and topology agree, which may be identified with its underlying metric structure.
These mostly serve as first sanity checks (e.g., when we define a normal topometric space we must check that a maximal one is normal if and only if it is normal as a pure topological space, and that minimal ones are always normal).

In the same way that much information can be gained on a topological space from spaces of continuous functions on such a space, we seek here to gain information on a topometric space from the (topologically) continuous and (metrically) Lipschitz functions thereon.
These are naturally linked with separation axioms.
In \fref{sec:Normal} we discuss topometric normality, which we related to existence results such as Urysohn's Lemma and Tietze's Extension Theorem.
As a consequence, we obtain a Lipschitz Morleyisation result, the unique model-theoretic result of this paper.
In \fref{sec:StoneCech} we construct the topometric Stone-Čech compactification and relate it to topometric complete regularity.
In \fref{sec:Completions} we study possible topological structures on the metric completion of a topometric space, relating these (to an extent) with completely regular spaces.
In \fref{sec:Abstract1Lipschitz} we characterise the spaces of continuous $1$-Lipschitz functions on compact topometric space, and show that the original topometric space can be recovered uniquely from its function space.

The reader is advised that Lipschitz functions on an ordinary metric spaces, and algebras thereof, are extensively studied by Weaver \cite{Weaver:LipschitzAlgebras}.
There is some natural resemblance between our object of study here and that of Weaver, with the increased complexity due to the additional topological structure.
The reader may wish to compare, for example, our version of Tietze's Extension Theorem (\fref{thm:Tietze}) with \cite[Theorem~1.5.6]{Weaver:LipschitzAlgebras} (as well as with the classical version of Tietze's Theorem, see Munkres \cite{Munkres:Topology}).

\section{Normal topometric spaces and Urysohn and Tietze style results}
\label{sec:Normal}

For two topometric spaces $X$ and $Y$ we define $\cC_{\cL(1)}(X,Y)$ to be the set of all continuous $1$-Lipschitz functions from $X$ to $Y$.
An important special case is $\cC_{\cL(1)}(X) = \cC_{\cL(1)}(X,\bC)$, where $\bC$ is equipped with the standard metric and topology (i.e., with the standard minimal topometric structure), which codes information both about the topology and about the metric structure of $X$.
In the present paper we seek conditions under which $\cC_{\cL(1)}(X)$ codes the entire topometric structure, as well as analogues of classical results related to separation axioms, in which $\cC(X)$ would be replaced with $\cC_{\cL(1)}(X)$.
As discussed in \cite{BenYaacov:TopometricSpacesAndPerturbations}, we consider the lower semi-continuity of the distance function to be a topometric version of the Hausdorff separation axiom, so it is expected that other classical separation axioms take a different form in the topometric setting.
We start with normality.

\begin{dfn}
  Let $X$ be a topometric space.
  We say that a closed set $F \subseteq X$ has \emph{closed metric neighbourhoods} if for every $r > 0$ the set $\overline B(F,r) = \{x \in X : d(x,F) \leq r\}$ is closed in $X$.

  We say that $X$ \emph{admits closed metric neighbourhoods} if all closed subsets of $X$ do.
\end{dfn}

It was shown in \cite{BenYaacov:TopometricSpacesAndPerturbations} that compact sets always have closed metric neighbourhoods, so a compact topometric space admits closed metric neighbourhoods.
Indeed, the first definition of a \emph{compact} topometric space in \cite{BenYaacov-Usvyatsov:CFO} was given in terms of closed metric neighbourhoods.
While this property seems too strong to be part of the definition of a non compact topometric space, it will play a crucial role in this section.

\begin{dfn}
  A \emph{normal topometric space} is a topometric space $X$ satisfying:
  \begin{enumerate}
  \item Every two closed subset $F,G \subseteq X$ with positive distance $d(F,G) > 0$ can be separated by disjoint open sets.
  \item The space $X$ admits closed metric neighbourhoods.
  \end{enumerate}
\end{dfn}

One checks that a maximal topometric space $X$ (i.e., equipped with the discrete $0/\alpha$ distance) is normal if and only if it is so as a topological space.
Similarly, a minimal topometric space (i.e., equipped with the metric topology) is always normal.
Also, every compact topometric space is normal (since it admits closed metric neighbourhoods and the underlying topological space is normal).

We contend that our definition of a normal topometric space is the correct topometric analogue of the classical notion of a normal topological space.
This will be supported by analogues of Urysohn's Lemma and of Tietze's Extension Theorem.
The technical core of the proofs (and indeed, the only place where the definition of a normal topometric space is used) lies in the following Definition and Lemma.

\begin{dfn}
  Let $X$ be a topometric space.
  \begin{enumerate}
  \item By an \emph{($S$-)system} we mean a sequence $\Xi = (F_\alpha,G_\alpha)_{\alpha \in S} = (F^\Xi_\alpha,G^\Xi_\alpha)_{\alpha \in S^\Xi}$ where $S \subseteq \bR$ and $F_\alpha,G_\alpha \subseteq X$ are closed, and if $\alpha < \beta$ then $F_\alpha \cap G_\beta = \emptyset$.
    We say that it is \emph{$c$-Lipschitz} for some $c > 0$ if $d(F_\alpha,G_\beta) \geq (\beta - \alpha)/c$ for $\alpha < \beta$ in $S$.
    It is \emph{total} if $F_\alpha \cup G_\alpha = X$ for all $\alpha \in S$.
  \item We say that $\Xi \subseteq \Xi'$ if $S^\Xi \subseteq S^{\Xi'}$ and $F^\Xi_\alpha \subseteq F^{\Xi'}_\alpha$, $G^\Xi_\alpha \subseteq G^{\Xi'}_\alpha$ for all $\alpha \in S^\Xi$.
  \item To each continuous function $f\colon X \rightarrow \bR$ we associate a total system $\Xi^f = (F^f_\alpha,G^f_\alpha)_{\alpha \in \bR}$ where $F^f_\alpha = \{x : f(x) \leq \alpha\}$, $G^f_\alpha = \{x : f(x) \geq \alpha\}$.
  \item We say that $\Xi = (F_\alpha,G_\alpha)_{\alpha \in S}$ is \emph{compatible} with $f\colon X \to \bR$ if $f\rest_{F_\alpha} \leq \alpha$ and $f\rest_{G_\alpha} \geq \alpha$ for $\alpha \in S$, i.e., if $\Xi \subseteq \Xi^f$.
  \end{enumerate}
\end{dfn}

Notice that a continuous function $f\colon X\to \bR$ is $c$-Lipschitz if and only if $\Xi^f$ is a $c$-Lipschitz system.

\begin{lem}
  \label{lem:LipschitzSystemExtension}
  Let $\Xi$ be a finite $c$-Lipschitz $S$-system in a normal topometric space.
  Then for every $c' > c$ and countable $S' \supseteq S$ there exists a total $c'$-Lipschitz $S'$-system $\Xi' \supseteq \Xi$.
\end{lem}
\begin{proof}
  Assume first that $S'$ is finite, and by adding $F_\alpha = G_\alpha = \emptyset$ for $\alpha \in S' \setminus S$ we may assume that $S' = S$.
  It is then enough to find $F'_\beta \supseteq F_\beta$ and $G'_\beta \supseteq G_\beta$ for one $\beta \in S$, such that $F'_\beta \cup G'_\beta = X$, keeping the other $F_\alpha$, $G_\alpha$ unchanged.
  Since the partial approximation is finite it is also $c'$-Lipschitz for some $c' < c$.
  Define:
  \begin{gather*}
    K = \bigcup_{\alpha \in S, \alpha < \beta} \overline B(F_\alpha,(\beta-\alpha)/c'),
    \qquad
    L = \bigcup_{\alpha \in S, \alpha > \beta} \overline B(G_\alpha,(\alpha-\beta)/c').
  \end{gather*}
  By construction $d(K,L) > 0$ and both are closed as finite unions of closed sets.
  Since $X$ is normal we can find disjoint open sets $U \supseteq K$ and $V \supseteq L$.

  We claim that $F'_\beta = F_\beta \cup V^c$ and $G'_\beta = G_\beta \cup U^c$ will do.
  Indeed, $X = F'_\beta \cup G'_\beta$.
  Assume now that $\alpha < \beta$, $\alpha \in S$.
  We already know by hypothesis that $d(F_\alpha,G_\beta) > \frac{\beta - \alpha}{c'}$.
  We also know by construction that $U \supseteq \overline B(F_\alpha,(\beta-\alpha)/c')$, whereby $d(F_\alpha,G_\beta') \geq \frac{\beta - \alpha}{c'}$.
  Similarly, if $\beta < \alpha$ then $d(F'_\beta,G_\alpha) \geq \frac{\alpha - \beta}{c'}$, and we are done.

  The case where $S'$ is infinite countable follows by induction, adding a single new element at a time.
\end{proof}

\begin{lem}
  \label{lem:LipschitzSystemFunction}
  In a normal topometric space $X$, for every finite $c$-Lipschitz system $\Xi$ and $c' > c$ there exists a continuous $c'$-Lipschitz $f\colon X \rightarrow \bR$ compatible with $\Xi$.
\end{lem}
\begin{proof}
  Let $\Xi = (F_\alpha,G_\alpha)_{\alpha \in S}$.
  Since $S$ is finite its convex hull is a compact interval $I \subseteq \bR$.
  Let $T \subseteq I$ be a countable dense subset containing $S$.
  By \fref{lem:LipschitzSystemExtension}, there exists a total $c'$-Lipschitz $T$-system $\Xi' = (F'_\alpha,G'_\alpha)_{\sigma \in T} \supseteq \Xi$.
  Letting $f(x) = \sup\{\alpha\in I : x \in G'_\alpha\} = \inf\{\alpha \in I : x\in F'_\alpha\}$ (here $\inf \emptyset = \sup I$ and $\sup \emptyset = \inf I$) one obtains a continuous, $c'$-Lipschitz function $f\colon X \to I$ compatible with $\Xi$.
\end{proof}

\begin{thm}[Urysohn's Lemma for topometric spaces]
  \label{thm:TopometricUrysohn}
  Let $X$ be a normal topometric space, $F,G \subseteq X$ closed sets, $0 < r < d(F,G)$.
  Then there exists a $1$-Lipschitz continuous function $f\colon X\to [0,r]$ equal to $0$ on $F$ and to $r$ on $G$.
\end{thm}
\begin{proof}
  Apply \fref{lem:LipschitzSystemFunction} to $S = \{0,r\}$, $F_0 = F$, $G_r = G$, $G_0 = F_r = X$.
\end{proof}

\begin{cor}
  \label{cor:DenseLipschitz}
  Let $X$ be a compact topometric space.
  Then the family of continuous Lipschitz functions on $X$ is dense in $\cC(X)$.
\end{cor}
\begin{proof}
  It will be enough to show that the family $\cC_\cL(X,\bR)$ of real-valued continuous Lipschitz functions on $X$ is uniformly dense in $\cC(X,\bR)$.
  Since $X$ is compact, it is normal.
  The family $\cC_\cL(X,\bR)$ forms a lattice, and in addition, for every two distinct points $x,y \in \tS_n(\cL)$ and values $r,s \in \bR$, there exists by Urysohn's Lemma $f \in \cC_\cL(X,\bR)$ such that $f(x) = s$ and $f(y) = r$.
  By the lattice version of the Stone-Weierstraß theorem, $\cC_\cL(X,\bR)$ is uniformly dense in $\cC(X,\bR)$.
\end{proof}

We turn to prove a topometric version of Tietze's Extension Theorem, which, as for ordinary topological spaces, can be viewed as a strengthening of Urysohn's Lemma.

\begin{fct}
  There exists a function $\flim\colon \bR^\bN \rightarrow \bR$ which is continuous in the product topology and $1$-Lipschitz in the uniform topology, such that if $|t_n - s| \leq 2^{-n}$ for all $n$ then $\flim(t_n) = s$.
\end{fct}
\begin{proof}
  See \cite{BenYaacov-Usvyatsov:CFO}.
\end{proof}

\begin{thm}[Tietze's Extension Theorem for topometric spaces]
  \label{thm:Tietze}
  Let $X$ be a normal topometric space.
  Then for every $c < c'$ every continuous $c$-Lipschitz function $f\colon Y \to [0,1]$ on a closed subset $Y \subseteq X$ extends to a continuous $c'$-Lipschitz function $g\colon X \to [0,1]$.

  Moreover, for an arbitrary topometric space the following are equivalent:
  \begin{enumerate}
  \item $X$ is a normal topometric space.
  \item Tietze's Extension Theorem for topometric spaces (i.e., the statement above) holds in $X$.
  \item Urysohn's Lemma (the conclusion of \fref{thm:TopometricUrysohn}) holds in $X$.
  \end{enumerate}
\end{thm}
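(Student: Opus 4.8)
My plan is to establish the forward implication (1)$\Rightarrow$(2) directly, then close the cycle of equivalences by cheap implications, using the fact that (1)$\Rightarrow$(3) is exactly \fref{prp:ContinuousLipschitzApproximation} and (3)$\Rightarrow$(4) is the first half of \fref{cor:TopometricUrysohn}, while (4)$\Rightarrow$(1) is the converse already proved there. So the only genuine work is the extension statement and the implication (2)$\Rightarrow$(1). For (2)$\Rightarrow$(1): apply Tietze's Extension Theorem to the closed subspace $Y = F \sqcup G$ (which is closed when $d(F,G)>0$, and is itself a normal — indeed discrete-distance-like — situation) with the function that is $0$ on $F$ and equal to, say, $\min(1, d(F,G)/2)$ on $G$; this is continuous and $1$-Lipschitz on $Y$, so it extends to a continuous Lipschitz function on $X$ separating $F$ and $G$ by open sets, and the same trick as in \fref{cor:TopometricUrysohn} (separating a point $x$ with $d(x,F)>r$ from $F$) gives closed metric neighbourhoods. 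Hence $X$ is normal.

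For the main extension statement, suppose $X$ is normal, $Y \subseteq X$ closed, $f \colon Y \to [0,1]$ continuous and $c$-Lipschitz, and fix $c < c'$. The idea is to run a successive-approximation argument of Urysohn type. Pick an increasing sequence $c = c_0 < c_1 < c_2 < \cdots < c'$ and a decreasing sequence of error tolerances $\varepsilon_n \to 0$ with $\sum \varepsilon_n < \infty$. At stage $n$ we will have a continuous $c_n$-Lipschitz function $g_n \colon X \to [0,1]$ (with $g_0 = 0$, say, or rather a first crude approximation) together with a continuous $c$-Lipschitz $h_n \colon Y \to \bR$ which is the "remaining error" $f - g_n\rest_Y$, of sup-norm at most some $\delta_n \to 0$. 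The step from $n$ to $n+1$: take a finite $c$-Lipschitz approximation of $h_n$ on $Y$ by sublevel/superlevel sets at a mesh fine enough relative to $\varepsilon_n$ and $\delta_n$; push it up to $X$ by \fref{lem:FiniteApproximationExtension}; and realise the pushed-up approximation by a continuous $(c + \text{small})$-Lipschitz function on $X$ via \fref{prp:ContinuousLipschitzApproximation}. Adding this correction to $g_n$ gives $g_{n+1}$, whose Lipschitz constant has increased only slightly (so staying below $c'$ if the increments are summed carefully) and whose restriction to $Y$ is closer to $f$.

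The convergence: $g = \lim g_n$ exists uniformly because $\|g_{n+1} - g_n\|_\infty$ is controlled by the mesh at stage $n$ which we take summable; $g$ is continuous as a uniform limit of continuous functions; $g$ is $c'$-Lipschitz because each $g_n$ is $c_n$-Lipschitz with $c_n < c'$ and a uniform limit of $c'$-Lipschitz functions is $c'$-Lipschitz (lower semi-continuity of $d$ is not even needed here, just the metric on the target); and $g\rest_Y = f$ because the errors $\delta_n \to 0$. One must also keep $g$ inside $[0,1]$, which is handled by truncating — composing with the $1$-Lipschitz retraction $[0,1] \leftarrow \bR$, $t \mapsto \max(0,\min(1,t))$ — at each stage, noting truncation does not increase the Lipschitz constant and does not move the values on $Y$ since $f$ already lands in $[0,1]$ and the errors are small; alternatively one invokes the $1$-Lipschitz forced-limit operator $\flim$ remarked on just above the theorem to pass from a sequence of $[0,1]$-valued approximants to a $[0,1]$-valued limit in one stroke.

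The main obstacle I anticipate is bookkeeping the Lipschitz constants: each application of \fref{prp:ContinuousLipschitzApproximation} only gives a $c$-Lipschitz (or slightly worse) function from a $c$-Lipschitz \emph{approximation}, but the approximation of $h_n$ obtained from its own sublevel sets is genuinely $c''$-Lipschitz for any $c'' > c$, so the accumulated constant after infinitely many corrections is $\sup_n c_n$, and one must choose the sequence $c_n \nearrow c'$ and the meshes so that this really stays below $c'$ — the slack $c' - c > 0$ is exactly what makes this possible, and is why the theorem is stated with strict inequality $c < c'$ rather than $c \le c'$. A secondary subtlety is ensuring at each stage that the "remaining error" $h_n$ really is still $c$-Lipschitz on $Y$ and not just $c'$-Lipschitz: since $h_n = f - g_n\rest_Y$ with $f$ being $c$-Lipschitz and $g_n\rest_Y$ being $c_n$-Lipschitz, $h_n$ is a priori only $(c + c_n)$-Lipschitz; the fix is to approximate $h_n$ by a finite $(c+c_n)$-Lipschitz approximation and correspondingly track $c + c_n$ rather than $c$ throughout, which still works provided the base constant we feed in plus all increments stays under $c'$ — again exactly the content of the strict inequality.
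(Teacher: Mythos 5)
Your handling of the cycle of equivalences is fine and matches the paper: (1)$\Rightarrow$(3) is \fref{prp:ContinuousLipschitzApproximation}, (3)$\Rightarrow$(4) is the first half of \fref{cor:TopometricUrysohn}, (4)$\Rightarrow$(1) is its converse, and your (2)$\Rightarrow$(1) via the two-point/two-set function on $F \sqcup G$ is essentially how the paper gets (ii)$\Rightarrow$(iv). The gap is in the main extension statement. In your successive-correction scheme the correction $u_n$ must track the \emph{oscillation} of the error $h_n = f - g_n\rest_Y$, not merely its sup-norm: nothing in your construction prevents $h_n$ from swinging through its full range $[-\delta_n,\delta_n]$ over a pair of points of $Y$ at distance $2\delta_n/(c+c_n)$, and then any $u_n$ with $\|u_n\rest_Y - h_n\|_\infty \leq \delta_{n+1}$ satisfies $\operatorname{Lip}(u_n) \geq (1-\delta_{n+1}/\delta_n)(c+c_n)$. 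Since the only available bound on the Lipschitz constant of $g = g_0 + \sum u_n$ is $\sum \operatorname{Lip}(u_n)$, and since $\delta_n \to 0$ forces $\sum(1-\delta_{n+1}/\delta_n)=\infty$ (so the increments contribute at least $c\sum(1-\delta_{n+1}/\delta_n) = \infty$), no choice of the sequences $c_n$ and $\delta_n$ keeps the total below $c'$. Your proposed fix of "tracking $c+c_n$" makes things worse: it yields the recursion $c_{n+1} \approx c + 2c_n$, which diverges geometrically. This is exactly the obstruction the author points to in the remark following the theorem --- the standard successive-approximation proof of Tietze cannot preserve the Lipschitz condition --- and the slack $c'-c$ is not there to absorb an infinite sum of corrections; it is consumed once, in passing from the $c$-Lipschitz function $f$ to $c'$-Lipschitz approximations of it.

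The paper's proof never sums corrections. For each $n$ it builds $g_n$ \emph{from scratch}: take the dyadic approximation $\Xi_n = \bigl\{\bigl(f^{-1}([0,\alpha]),\, f^{-1}([\alpha,1])\bigr)\colon \alpha \in S_n\bigr\}$ of $f$ itself on $Y$ at mesh $2^{-n}$ (which is $c'$-Lipschitz as an approximation for the single fixed $c'>c$), extend it to $X$ by \fref{lem:FiniteApproximationExtension}, and realise it by a continuous $c'$-Lipschitz $g_n\colon X \to [0,1]$ via \fref{prp:ContinuousLipschitzApproximation}. Every $g_n$ has the \emph{same} constant $c'$, and $|g_n\rest_Y - f| \leq 2^{-n}$, but the sequence need not converge off $Y$. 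The device you mention only in passing, the forced limit, is what closes the argument: $\flim\colon [0,1]^\bN \to [0,1]$ is continuous and $1$-Lipschitz for the supremum metric, so $g = \flim g_n$ is continuous and $c'$-Lipschitz regardless of convergence, while on $Y$ the fast convergence gives $g\rest_Y = f$. Any repair of your argument needs some such mechanism producing one $c'$-Lipschitz output from infinitely many $c'$-Lipschitz inputs; pointwise summation of corrections cannot do it.
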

\begin{proof}
  Let $Y \subseteq X$ be closed, $f\colon Y \to [0,1]$ be continuous and $c$-Lipschitz.

  For $n \in \bN$ let $S_n = \{k2^{-n} : 0 \leq k \leq 2^n\}$, and $\Xi_n = (F^f_\alpha,G^f_\alpha)_{\alpha \in S_n}$.
  Then $\Xi_n$ is a $c$-Lipschitz system.
  By \fref{lem:LipschitzSystemFunction} there is a $c'$-Lipschitz function $g_n\colon X \rightarrow [0,1]$ compatible with $\Xi_n$.
  In particular, if $y \in Y$ and $k2^{-n} \leq f(y) \leq (k+1)2^{-n}$ then $y \in F^f_{(k+1)2^{-n}} \cap G^f_{k2^{-n}} \subseteq F^{g_n}_{n,(k+1)2^{-n}} \cap G^{g_n}_{n,k2^{-n}}$, whereby $k2^{-n} \leq g_n(y) \leq (k+1)2^{-n}$ as well.

  Let $g = \flim g_n$.
  Then $g$ is continuous and $c'$-Lipschitz, and $|g_n\rest_Y-f| \leq 2^{-n}$ for all $n$ implies $g\rest_Y = f$.

  For the moreover part:
  \begin{cycprf}
  \item This is the main assertion.
  \item Immediate.
  \item[\impfirst]
    Assume Urysohn's Lemma holds in $X$.
    Then closed sets of positive distance can be separated by a $1$-Lipschitz continuous function, and therefore by open sets.
    Also, if $F \subseteq X$ is closed and $d(x,F) > r$ then we may separate $F$ and $x$ by a $1$-Lipschitz continuous function such that $f\rest_F = 0$ and $f(x) > r$.
    Then $\{y : f(y) \leq r\}$ is a closed set containing $\overline B(F,r)$ but not $x$.
    If follows that $\overline B(F,r)$ is closed.
  \end{cycprf}
\end{proof}

This proof of Tiezte's theorem is fairly different from proofs we found in the literature, and which do not seem to be capable of preserving the Lipschitz condition.

For the last result of this section we shall assume some familiarity with continuous logic.
A language for continuous logic was defined in \cite{BenYaacov-Usvyatsov:CFO} to consist of a collection of symbols equipped with uniform continuity moduli, which their interpretations are required to respect.
Arbitrary continuity moduli  are allowed since, first, this extra generality creates no additional difficulties, and second, even if we had required all symbols to be, say, $1$-Lipschitz, arbitrary definable predicates would still merely be uniformly continuous, creating an inconvenient discrepancy.
That said, we can now show that in many situations one may assume that the language is indeed $1$-Lipschitz.

\begin{thm}[Lipschitz Morleyisation]
  \label{thm:LipschitzMorleyisation}
  Let $\cL$ be any continuous language.
  \begin{enumerate}
  \item
    The family of Lipschitz $\cL$-definable predicates is uniformly dense in the family of all definable predicates and witness distances between types.
  \item
    There exists a $1$-Lipschitz relational language $\cL'$ of cardinality $|\cL| + \aleph_0$ and an $\cL'$-theory $T_0$, such that the class of $\cL$-structures stands in a bidefinable bijection with the class of models of $T_0$, and moreover, for any two $p,q \in \tS_n(T_0)$ we have $d(p,q) = \sup_P |P^p - P^q|$ as $P$ varies over all $n$-ary predicate symbols in $\cL'$ (so in particular, $T_0$ eliminates quantifiers).
    This bijection necessarily respects elementary embeddings, ultra-products, elementary sub-classes, and so on.
  \end{enumerate}
\end{thm}
\begin{proof}
  For the first assertion, for each $n$ the space $\tS_n(\cL)$ of complete $n$-types in $\cL$ is compact, so we may apply \fref{cor:DenseLipschitz}, observing that the $n$-ary $\cL$-definable predicates are in a natural bijection with the continuous function on $\tS_n(\cL)$, and that this bijection respects uniform distance and uniform continuity moduli.
  The second assertion follows.
\end{proof}

\section{Completely regular topometric spaces and Stone-Čech compactification}
\label{sec:StoneCech}

Let $\{X_i : i \in I\}$ be a family of topometric spaces.
We equip the set $\prod_{i \in I} X_i$ with the product topology and the supremum metric $d(\bar x,\bar y) = \sup\{d(x_i,y_i) : i \in I\}$.
One verifies easily the result is indeed a topometric space which we call the \emph{product topometric structure}.

In particular we obtain large compact topometric spaces of the form $[0,\infty]^I$, and we claim that these are in some sense universal, meaning that every compact topometric space embeds in one of those.
Similarly, every bounded compact topometric (i.e., of finite diameter) can be embedded in $[0,M]^I$, and up to re-scaling in $[0,1]^I$.
In fact we shall show that every completely regular topometric space embeds in such a space, obtaining a Stone-Čech compactification.

\begin{dfn}
  \label{dfn:Compatification}
  By an \emph{embedding} of topometric spaces we mean a map $X \hookrightarrow Y$ between topometric spaces which is both a topological (homeomorphic) embedding and an isometric one.

  By a \emph{compactification} of a topometric space $X$ we mean a topometric embedding in a compact topometric space with dense image.
\end{dfn}

Say that a family of functions $\cF \subseteq \bC^X$ \emph{separates points from closed sets} if for every closed set $F \subseteq X$ and $x \in X \setminus F$, there is a function $f \in \cF$ which is constant on $F$ and takes some different value at $x$.

\begin{fct}
  \label{fct:TopologicalCompletelyRegularEmbedding}
  Let $X$ be a Hausdorff topological space, $\cF \subseteq \cC(X)$ a family separating points from closed sets.
  Then the map $\theta\colon X \to \bC^\cF$ defined by $x \mapsto (f \mapsto f(x))$ is a topological embedding.
\end{fct}
\begin{proof}
  Standard.
\end{proof}

\begin{dfn}
  \label{dfn:CompletelyRegular}
  Let $X$ be a topometric space.
  Say that a family of functions $\cF \subseteq \cC_{\cL(1)}(X)$ is \emph{sufficient} if
  \begin{enumerate}
  \item It separates points and closed sets.
  \item For $x,y \in X$ we have
    \begin{gather*}
      d(x,y) = \sup \bigl\{ |f(x)-f(y)| : f \in \cF \bigr\}.
    \end{gather*}
    (Clearly, $\geq$ always holds.)
  \end{enumerate}
  A topometric space $X$ is \emph{completely regular} if $\cC_{\cL(1)}(X)$ is sufficient.
\end{dfn}

In view of \fref{fct:TopologicalCompletelyRegularEmbedding} we may say that a topometric space $X$ is completely regular if $\cC_{\cL(1)}(X)$ captures both the topological structure and the metric structure of $X$.

\begin{lem}
  \label{lem:TopometricCompletelyRegularEmbedding}
  Let $X$ be a topometric space space, and let $\cF \subseteq \cC_{\cL(1)}(X)$ be sufficient.
  Then the map $\theta\colon X \to \bC^\cF$ defined by $x \mapsto (f \mapsto f(x))$ is a topometric embedding.
\end{lem}
\begin{proof}
  Immediate.
\end{proof}

\begin{thm}
  \begin{enumerate}
  \item Let $X$ be a completely regular topometric space, and let $M = \diam X \in [0,\infty]$.
    Then $X$ embeds in a power of $[0,M]$.
  \item Every compact or normal topometric space is completely regular.
  \item Every subspace of a completely regular space is completely regular.
  \item Let $X$ be a maximal topometric space.
    Then it is topologically completely regular if and only if it is topometrically completely regular.
  \end{enumerate}
\end{thm}
\begin{proof}
  For the first item, let $\cF_0 \subseteq \cC_{\cL(1)}(X,\bR^+)$ consist of those $f$ such that $\inf f = 0$.
  Then $\cF_0$ is sufficient as well, and consists of functions $f\colon X \rightarrow [0,M]$, so \fref{lem:TopometricCompletelyRegularEmbedding} yields the desired embedding.

  The second item follows from \fref{thm:TopometricUrysohn}, keeping in mind that a compact topometric space is normal, and that since the metric of a topometric space $X$ refines its topology, if $F \subseteq X$ is closed and $x \notin F$ then $d(x,F) > 0$.

  For the third item, assume that $X$ is completely regular and $Y \subseteq X$.
  If $F \subseteq Y$ is closed then $F = Y\cap \overline F$, where $\overline F$ is the closure in $X$.
  Thus if $x \in Y \setminus F$ then $x \in X \setminus \overline F$, so there is a $1$-Lipschitz continuous function separating $\overline F$ from $x$, and its restriction to $Y$ is continuous and $1$-Lipschitz as well.
  A similar restriction argument works for witnessing distances.

  The last item follows from the fact that if $X$ is equipped with the $0/\alpha$ distance then every function to $[0,\alpha]$ is $1$-Lipschitz.
\end{proof}

\begin{cor}
  A topometric space admits a compactification if and only if it is completely regular.
\end{cor}

\begin{thm}
  \label{thm:StoneCechUniversalProperty}
  Let $X$ be completely regular.
  Then it admits a compactification $\beta X$ satisfying the following universal property:
  \begin{quote}
    Every $1$-Lipschitz continuous function $f\colon X \to [0,\infty]$ can be extended to such a function on $\beta X$ (and the extension is unique).
  \end{quote}
  Moreover, $\beta X$ is unique up to a unique isomorphism (i.e., isometric homeomorphism) and satisfies the same universal property with any compact topometric space $Y$ instead of $[0,\infty]$.
\end{thm}
\begin{proof}
  Let $\cF = \cC_{\cL(1)}(X,\bR^+)$ and let $\theta\colon X \to (\bR^+)^\cF \subseteq [0,\infty]^\cF$ be as in \fref{lem:TopometricCompletelyRegularEmbedding}.
  Identify $X$ with $\theta(X)$ and let $\beta X$ be its closure in $[0,\infty]^\cF$.

  For $f \in \cF$, let $\pi_f\colon [0,\infty]^\cF \to [0,\infty]$ be the projection on the $f$th coordinate.
  Then $\pi_f \circ \theta = f$, so $\pi_f\colon \beta X \to [0,\infty]$ is as required.
  Given $f \in \cC_{\cL(1)}(X,[0,\infty])$ and $n \in \bN$, the truncation $f \wedge n\colon X \to [0,n]$ belongs to $\cF$ and the sequence $\pi_{f \wedge n}$ is increasing, converging point-wise to some $g\colon \beta X \to [0,\infty]$.
  The collection of open subsets of $[0,\infty]$ which are either bounded or contain $\infty$ forms a basis.
  For such an open set $U$ there is $n$ such that either $[n,\infty] \subseteq U$ or $U \cap [n,\infty] = \emptyset$, and in either case $g^{-1}(U) = (f \wedge n)^{-1}(U)$ is open.
  Thus $g$ is continuous.
  (Of course we could have also let $\cF = \cC_{\cL(1)}(X,[0,\infty])$ to begin with.)

  Now let $Y$ be any compact topometric space.
  Then $Y$ embeds in $[0,\infty]^J$ for some $J$.
  If $f \in \cC_{\cL(1)}(X,Y)$ then $\pi_j \circ f \in \cC_{\cL(1)}(X,[0,\infty])$ for $j \in J$ and thus extends to $g_j \in \cC_{\cL(1)}(\beta X,[0,\infty])$.
  Let $g = (g_j)\colon \beta X \to [0,\infty]^J$, so $g\rest_X = f$.
  Then $g(X) \subseteq Y$, $X$ is dense in $\beta X$ and $Y$ is closed in $[0,\infty]^J$, so $g(\beta X) \subseteq Y$ as well.

  The uniqueness of an object satisfying this universal property is now standard.
\end{proof}

In other words, for every compact $Y$ the restriction $\cC_{\cL(1)}(\beta X,Y) \to \cC_{\cL(1)}(X,Y)$ is bijective.

\begin{dfn}
  The compactification $\beta X$, if it exists (i.e., if $X$ is completely regular) is called the \emph{Stone-Čech compactification} of $X$.
\end{dfn}

Automorphism groups of metric structures probably form the most natural class of examples of non (locally) compact topometric spaces.
They are easily checked to be completely regular.

\begin{prp}
  Let $\cM$ be a metric structure and let $G = \Aut(\cM)$, equipped with the topology $\sT$ of point-wise convergence and with the distance $d_u$ of uniform convergence.
  Then $(G,\sT,d_u)$ is a completely regular topometric space.

  Similarly, if $(G,\sT)$ is any metrisable topological group, with left-invariant compatible distance $d_L$, and $d_u(f,g) = \sup_h d_L(fh,gh)$, then $(G,\sT,d_u)$ is a completely regular topometric space.
\end{prp}
\begin{proof}
  Since $d_u(f,g) = \sup_{a \in M} d(fa,ga)$, and for each $a$ the function $(f,g) \mapsto d(fa,ga)$ is continuous, $d_u$ is lower semi-continuous.
  Assume that $d_u(f,g) > r$.
  Then there exists $a \in M$ such that $d(fa,ga) > r$, and we may define $\theta(x) = d(fa,xa)$.
  Then $\theta$ is continuous and $1$-Lipschitz (by definition of point-wise and uniform convergence).
  In addition, $\theta(f) = 0$ and $\theta(g) > r$.
  Thus continuous $1$-Lipschitz functions witness distances, and it follows that $d_u$ is lower semi-continuous.
  Now let $U$ be a topological neighbourhood of $f$.
  Then there is a finite tuple $\bar a \in M^n$ and $\varepsilon > 0$ such that $U$ contains the set
  \begin{gather*}
    U_{\bar a,f\bar a,\varepsilon}
    = \{ h : d(h\bar a,f\bar a) < \varepsilon\}.
  \end{gather*}
  Then the function $\rho(x) = d(f\bar a,x\bar a)$ separates $f$ from $G \setminus U$.

  A similar reasoning applies to the case of an abstract group (acting on itself on the left).
  In fact, when $G$ is completely metrisable then this case can be shown to be a special case of the first, and every metrisable group can be embedded in a completely metrisable one.
\end{proof}

\begin{qst}
  Are automorphism groups of metric structures topometrically normal?
  In other words, do continuous $1$-Lipschitz functions witness distance between closed sets?
\end{qst}

\section{Completions}
\label{sec:Completions}

Most topometric spaces one would encounter, such as compact ones (e.g., type spaces) or automorphism groups, are (metrically) complete.
If $X$ is a complete topometric space and $X_0 \subseteq X$ is metrically dense then one can recover the underlying metric space of $X$ from $X_0$, but what about the topology?
Let us first observe that one cannot always expect to be able to recover the topology.

\begin{exm}
  \label{exm:BadCompletion}
  Let $X$ be the disjoint union of $[0,1]$ with $\bN$, where $[0,1]$ is equipped with the usual topology and distance, $\bN$ is equipped with the discrete topology and $0/1$ distance, the distance between any point of $[0,1]$ and of $\bN$ is one, and $0$ (hereafter always referring to $0 \in [0,1]$ and not to $0 \in \bN$) is the limit of $\bN$.
  Thus $X$ is a compact topometric space, which can be naturally viewed as a subspace of $[0,1]^\bN$ by sending $t \in [0,1]$ to $(t,0,0,\ldots)$, and sending $n \in \bN$ to the sequence $(0,0,\ldots,0,1,1,\ldots)$ consisting of $n$ initial zeroes.
  Then $X_0 = (0,1) \cup \bN$ is metrically dense in $X$, but one cannot recover from it the fact that $0$ is an accumulation point of $\bN$.
\end{exm}

Worse still, sometime $\widehat X$ does not even admit a topometric structure extending that of $X$.

\begin{exm}
  \label{exm:NoCompletion}
  Let $X = [0,1]^2$, equipped with the usual topology.
  We put the usual distance on $(0,1] \times \{0\}$, and make any two points not both there have distance one.
  It is clear that $d$ refines the topology, and a case-by-case consideration yields that if $d(x,y) > r$ then there are neighbourhoods $x \in U$, $y \in V$ such that $d(U,V) > r$, so $d$ is lower semi-continuous and $X$ is a topometric space.

  Now, $\widehat X \setminus X$ consists of a single new point $*$, the metric limit of $(t,0)$ as $t \rightarrow 0$.
  If $\widehat \sT$ is any topology on $\widehat X$ coarser then $d_{\widehat X}$ and agreeing with the given topology on $X$, then $*$ and $(0,0)$ cannot be separated by open sets, so $\widehat X$ cannot even be topologically Hausdorff, let alone a topometric space.
\end{exm}

Thus two questions arise: which topometric spaces admit topometric completions, and under which conditions we can recover the topology on a complete topometric space $X$ from the restriction to a metrically dense $X_0 \subseteq X$.

\begin{prp}
  \label{prp:CompletionCriterion}
  Let $X$ be a topometric space.
  Then the following are equivalent:
  \begin{enumerate}
  \item \label{item:CompletionCriterion}
    For every $x,y \in \widehat X$ there exist $\varepsilon > 0$ and open sets $U,V \subseteq X$ such that $B(x,\varepsilon) \cap X \subseteq U$, $B(y,\varepsilon) \cap X \subseteq V$ and $d(U,V) > r$.
  \item The metric completion $\widehat X$ admits a strongest topology $\widehat \sT$ rendering it a topometric extension of $X$.
    A set $U \subseteq \widehat X$ belongs to $\widehat \sT$ if and only if it is metrically open in $\widehat X$ and $U \cap X$ is open in $X$.

    Moreover, for $V \subseteq X$ open let $V' \subseteq \widehat X$ consist of all $x$ such that $B(x,\varepsilon) \cap X \subseteq V$ for some $\varepsilon > 0$.
    Then $V' \in \widehat \sT$ is maximal such that $V' \cap X = V$.
  \item There exists an embedding of $X$ into a complete topometric space.
  \end{enumerate}
\end{prp}
\begin{proof}
  \begin{cycprf}
  \item
    Let $Y \supseteq X$ be a complete extension of $X$, and let $x,y \in \widehat X \subseteq Y$.
    Since the distance is lower semi-continuous, there are open sets $U,V \subseteq Y$ such that $x \in U$, $y \in V$ and $d(U,V) > r$.
    Since the distance refines the topology, there exists $\varepsilon > 0$ such that $B_Y(x,\varepsilon) \subseteq U$ and $B_Y(y,\varepsilon) \subseteq V$.
    Intersecting with $X$, we are done.
  \item
    Clearly, $\widehat \sT$ is a topology, coarser than $d_{\widehat X}$.
    One easily checks the moreover part, so $(X,\sT) \subseteq (\widehat X,\widehat \sT)$ is a topological embedding.

    Now let $x,y \in \widehat X$, $d(x,y) > r$, and let $U$, $V$ and $\varepsilon$ be as given by our hypothesis.
    Then $x \in U'$ and $y \in V'$.
    Moreover, $d(U',V') = d(U,v) > r$, showing that $d_{\widehat X}$ is lower $\widehat \sT$-semi-continuous, and $(\widehat X,\widehat \sT,d_{\widehat X})$ is a topometric space.

    Finally, let $\widehat \sT'$ be any topology on $\widehat X$ rendering it a topometric extension of $X$.
    In particular, $d_{\widehat X}$ refines $\widehat \sT'$ so $\widehat \sT' \subseteq \widehat \sT$.
  \item[\impfirst]
    Immediate.
  \end{cycprf}
\end{proof}

This answers one question, and allows us to restate the other as, under which conditions is a complete topometric space $X$ the strongest completion of a metrically dense $X_0 \subseteq X$.

\begin{dfn}
  Call a topometric space \emph{completable} if it satisfies the equivalent conditions of \fref{prp:CompletionCriterion}.
\end{dfn}

\begin{prp}
  Every completely regular topometric space $X$ is completable.
  Moreover, the Stone-Čech compactification $\beta X$ contains a (unique) metric copy of $\widehat X$ equipped with strongest completion topology, so the strongest completion is completely regular as well.
\end{prp}
\begin{proof}
  Since $\beta X$ is compact, it is complete, so it contains a copy of $\widehat X$.

  Now let $U \in \widehat \sT$, and let $x \in U \subseteq \widehat X$.
  Then there is some $\varepsilon > 0$ such that $B(x,2\varepsilon) \subseteq U$ and $U \cap X$ is open in $X$.
  Let $y \in B(x,\varepsilon) \cap X$.
  Then by complete regularity there exists $f \in \cC_{\cL(1)}(X,[0,\varepsilon]) = \cC_{\cL(1)}(\beta X,[0,\varepsilon])$ such that $f(y) = \varepsilon$ and $f\rest_{X \setminus U} = 0$.
  Then $x \in \{z \in \widehat X\colon f(z) > 0\} \subseteq U$, so $U$ is open in the topology induced from $\beta X$.
  Since $\widehat \sT$ is strongest possible topology on the completion, it is the induced topology.
\end{proof}

This, however, does not solve the problem raised in \fref{exm:BadCompletion}, since there all spaces were completely regular.
We content ourselves here with providing sufficient conditions for a topometric space to be the unique strongest completion of any metrically dense subset.
Given the rest of this paper, we allow ourselves to assume that $X$ is completely regular, and show that the extra conditions are relatively well behaved under various constructions.

\begin{itemize}
\item[$(*)$]
  For every open set $U \subseteq X$ and $r > 0$, the open metric neighbourhood $B(U,r)$ is (topologically) open.
\item[$(**)$]
  For every open set $U \subseteq X$, $x \in \widehat U \subseteq \widehat X$ and $r > 0$, there exists $\varepsilon > 0$ such that $B(x,\varepsilon) \cap X \subseteq B(U,r)^\circ$.
  Equivalently, there is $V \subseteq B(U,r)$ open such that $\widehat U \subseteq V'$, with $V'$ as in \fref{prp:CompletionCriterion}.
  When $X$ is complete, this is the same as saying that $\widehat U \subseteq B(U,r)^\circ$.
\end{itemize}
Clearly $(*)$ implies $(**)$.

\begin{prp}
  \label{prp:ContinuousLipschitzDenseExtension}
  Let $X$ be a complete, completely regular topometric space and $X_0 \subseteq X$ metrically dense.
  Then $X$ satisfies $(*)$ (respectively, $(**)$) if and only if $X_0$ does and $X$ carries the strongest possible topology as a completion of $X_0$.
\end{prp}
\begin{proof}
  Let $U$ be open in $X_0$, and let $V \subseteq X$ be open such that $U = V \cap X_0$.
  Since the metric refines the topology, $U$ is $d$-dense in $V$, so $B_X(U,r) = B_X(V,r)$.
  When $X$ satisfies $(*)$, this latter set is open, and therefore so is $B_{X_0}(U,r) = B_X(V,r) \cap X_0$, whence $(*)$ for $X_0$.
  Assuming that $X$ satisfies $(**)$, there is $W \subseteq X$ open such that $\widehat V \subseteq W \subseteq B_X(V,r)$, in which case $W \cap X_0 \subseteq B_{X_0}(U,r)$ and $\widehat U = \widehat V \subseteq W \subseteq (W \cap X_0)'$, and we have $(**)$ for $X_0$.

  Conversely, let us assume that $X$ carries the strongest completion topology, and let $V \subseteq X$ be open, $U = V \cap X_0$.
  If $X_0$ satisfies $(*)$ then $B_X(V,r) \cap X_0 = B_{X_0}(U,r)$ is open in $X_0$, and then $B(V,r)$ is open in $X$.
  If $X_0$ only satisfies $(**)$, there exists $W \subseteq X$ open such that $W \cap X_0 \subseteq B_{X_0}(U,r) = B_X(V,r) \cap X_0$ and $\widehat V = \widehat U \subseteq (W \cap X_0)'$.
  But then $(W \cap X_0)' \subseteq B_X(V,r)$ as well, so $X$ has $(**)$.

  Assume now that $X$ satisfies $(**)$ and let us show that it is the strongest completion of $X_0$.
  Indeed, all we need to show is that if $f \in \cC_{\cL(1)}(X_0)$ then $\hat f$, the unique $1$-Lipschitz extension of $f$ to $X$, is continuous at every $x \in X$.
  Assuming, as we may, that $\hat f(x) = 0$, let $U = \{y \in X_0 : |f(y)| < \varepsilon\}$ for some $\varepsilon > 0$.
  Then $U \subseteq X_0$ is open, so of the form $V \cap X$ for some open $V \subseteq X$, and $x \in \widehat U = \widehat V \subseteq B_X(V,\varepsilon)^\circ \subseteq \{y \in X : | \hat f(y)| < 2\varepsilon\}$.
  Since $\varepsilon$ was arbitrary, $\hat f$ is continuous at $x$.
\end{proof}

\begin{lem}
  Condition $(*)$ holds in every topometric space of the form $\prod [s_i,r_i]$.
  More generally, it holds in every minimal or maximal topometric space, and if it holds in each $X_i$ then it holds in $\prod X_i$.
  Similarly, if condition $(**)$ holds in each $X_i$ then it also holds in $\prod X_i$.
\end{lem}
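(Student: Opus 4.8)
The plan is to settle the minimal and maximal cases by inspection, note that $\prod[s_i,r_i]$ is a product of minimal spaces, and then reduce the two product statements to the fact that in a product the open metric neighbourhood of a basic box depends on only finitely many coordinates.

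In a minimal space $B(U,r)=\bigcup_{x\in U}B(x,r)$ is a union of open metric balls, hence open, so $(*)$ holds. In a maximal space $d(\cdot,U)$ takes only the values $0$ (on $U$) and $1$, so $B(U,r)=U$ for $0<r\le 1$ and $B(U,r)=X$ for $r>1$, which is open in either case, so $(*)$ holds. Since each $[s_i,r_i]$ carries its metric topology, the statement about $\prod[s_i,r_i]$ will follow from the product case of $(*)$.

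For $(*)$ in a product $X=\prod_iX_i$ with each $X_i$ satisfying $(*)$: given $U\subseteq X$ open and $r>0$, write $U=\bigcup_\lambda W_\lambda$ with $W_\lambda=\prod_iV^\lambda_i$ a basic box and $V^\lambda_i=X_i$ off a finite set $I_\lambda$. The supremum metric gives $d(\bar x,W_\lambda)=\sup_id(x_i,V^\lambda_i)=\max_{i\in I_\lambda}d(x_i,V^\lambda_i)$, so $B(W_\lambda,r)=\bigcap_{i\in I_\lambda}\pi_i^{-1}\bigl(B(V^\lambda_i,r)\bigr)$ is a finite intersection of preimages of sets open by $(*)$ in the factors, hence open; and $B(U,r)=\bigcup_\lambda B(W_\lambda,r)$ is open. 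This also disposes of $\prod[s_i,r_i]$.

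For $(**)$ in a product the same computation shows that for a basic box $W=\prod_iV_i$ with exceptional set $I_0$ one has $\overline W^d=\prod_i\overline{V_i}^d$ and $B(W,r)^\circ=\prod_iB(V_i,r)^\circ$ (using that coordinate projections are open continuous surjections, so $(\pi_i^{-1}S)^\circ=\pi_i^{-1}(S^\circ)$), whence $(**)$ in the factors gives $\overline{W_\lambda}^d\subseteq B(W_\lambda,r)^\circ\subseteq B(U,r)^\circ$ for every box $W_\lambda\subseteq U$; thus $\bigcup_\lambda\overline{W_\lambda}^d\subseteq B(U,r/2)^\circ$, and since this union lies between $U$ and $\overline U^d$, taking metric closures gives $\overline U^d\subseteq\overline{B(U,r/2)^\circ}^d$. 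The step I expect to cost the most effort is upgrading this to $\overline U^d\subseteq B(U,r)^\circ$: because metric closure does not distribute over an infinite union of boxes, one cannot simply quote the box case for a general open $U$, and one must instead argue directly that a point $\bar x$ with $d(\bar x,U)=0$ has a box neighbourhood contained in $B(U,r)$ — fixing a box $W=\prod V_i\subseteq U$ that $\bar x$ is metrically close to, applying $(**)$ in the finitely many coordinates $i\in I_0$, and using that in the supremum metric ``$\bar x$ close to $U$'' forces ``$x_i$ close to $V_i$'' simultaneously for all such $i$. Verifying that the resulting coordinatewise neighbourhood actually contains $\bar x$ itself (and not merely the nearby point of $U$) is the crux, and is where the hypothesis $(**)$ in the $X_i$ has to be exploited with care.
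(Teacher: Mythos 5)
Your treatment of the minimal and maximal cases, of $\prod[s_i,r_i]$, and of condition $(*)$ in products is correct and complete: the identities $d(\bar x,W)=\sup_i d(x_i,V_i)$ and $B(W,r)=\bigcap_{i\in I_0}\pi_i^{-1}\bigl(B(V_i,r)\bigr)$ for a basic box $W=\prod V_i$, together with $B(U,r)=\bigcup_\lambda B(W_\lambda,r)$, do all the work. (The paper offers no argument to compare with --- its proof is the single word ``Easy'' --- so the proposal has to stand on its own.) Your box case of $(**)$ is also fine.

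For the last assertion, however, there is a genuine gap, and you say so yourself: the write-up stops exactly at what you call the crux, so the claim that $(**)$ passes to products is not proved. Worse, the route you sketch --- fix one box $W=\prod V_i\subseteq U$ containing a point $\bar y\in U$ with $d(\bar x,\bar y)$ small, then apply $(**)$ coordinatewise --- is blocked in principle. From $d(\bar x,\bar y)<\delta$ you only get $d(x_i,V_i)<\delta$, i.e.\ $x_i$ is metrically \emph{close} to $V_i$, not in its metric closure, and $(**)$ in $X_i$ says nothing about such points: it produces open neighbourhoods only for points $z$ with $d(z,V_i)=0$. The strengthening you would need, namely that $d(x,V)<\delta<r$ implies $x\in B(V,r)^\circ$, sits strictly between $(*)$ and $(**)$ and does \emph{not} follow from $(**)$. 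For instance, let $X=\{x,v\}\cup\{z_n\colon n\in\bN\}$ with every point isolated except that $x$ is the topological limit of the $z_n$, and with $d(x,v)=1/2$ and all other (distinct) distances equal to $2$; every point is metrically isolated, so $\overline V^d=V$ for all $V$ and $(**)$ holds vacuously, yet $B(\{v\},1)=\{v,x\}$ is not a neighbourhood of $x$. So completing the lemma requires applying $(**)$ in each factor to an open set that has $x_i$ in its metric \emph{closure} --- for example the slice $U_i=\{z_i\colon \exists\,\bar w\in U,\ w_i=z_i,\ d(x_j,w_j)<\varepsilon\ \text{for all } j\neq i\}$, which is open (a union of open slices of $U$) and satisfies $d(x_i,U_i)=0$ --- and then one must still reconcile the fact that different coordinates produce different witnesses $\bar w\in U$. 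None of this matching argument appears in your proposal, so as it stands the $(**)$ part of the lemma remains unestablished.
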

\begin{proof}
  Easy.
\end{proof}

\begin{lem}
  Condition $(*)$ holds in every topometric group.
  In fact, while we usually require that the distance in a topometric group be biïnvariant, here it is enough that it be invariant on one side.
\end{lem}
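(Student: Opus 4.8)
The plan is to write the open metric neighbourhood of an open set as a \emph{union of topological translates} of that set, thereby sidestepping the fact that metric balls themselves need not be topologically open. Suppose first that the distance $d$ on the topometric group $G$ is left-invariant, i.e.\ $d(gx,gy) = d(x,y)$ for all $g,x,y \in G$, and write $B = \{g \in G\colon d(g,e) < r\}$ for the open metric ball of radius $r$ about the identity $e$. For $x \in G$ and $u \in U$, left-invariance gives $d(x,u) = d(u^{-1}x,e)$, so $d(x,u) < r$ if and only if $u^{-1}x \in B$, i.e.\ $x \in uB$. Hence
\[
  B(U,r) = \bigcup_{u \in U} uB = \bigcup_{g \in B} Ug.
\]

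Next I would invoke the fact that $(G,\sT)$ is a topological group, so that for each $g \in G$ the right translation $x \mapsto xg$ is a homeomorphism of $(G,\sT)$; in particular $Ug$ is open whenever $U$ is. Therefore $B(U,r)$, being a union of the open sets $Ug$, is open, which is exactly condition $(*)$. The right-invariant case is handled symmetrically: there $d(x,u) = d(xu^{-1},e)$, so $d(x,u) < r$ iff $x \in Bu$, whence $B(U,r) = \bigcup_{g \in B} gU$ is again a union of open sets.

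The only point that deserves emphasis — and the reason a one-side invariant distance already suffices — is that the ball $B$ need not be topologically open, since the metric merely refines the topology; so one cannot simply say that $B(U,r) = U\cdot B$ is open because it is a product of open sets. The reformulation $B(U,r) = \bigcup_{g\in B} Ug$ uses only the topological openness of $U$ together with the homeomorphy of translations in a topological group, never the openness of $B$. Beyond this observation there is no real obstacle: the argument is the displayed one-line computation plus the standard fact about translations.
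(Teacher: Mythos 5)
Your proof is correct and follows exactly the paper's route: the paper's one-line argument is precisely the identity $B(U,r) = \bigcup_{d(h,1)<r} Uh$ for a left-invariant distance, with openness following from continuity of translations, which is what you verify in detail (including the pertinent remark that the ball $B$ itself need not be topologically open, so one must translate $U$ rather than $B$).
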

\begin{proof}
  Assume that the distance is left-invariant.
  Then one checks that $B(U,r) = \bigcup_{d(h,1) < r} Uh$.
\end{proof}

In addition, type spaces in continuous logic, equipped with the usual distance are compact, so completely regular, and satisfy $(*)$, and the same holds for local type spaces (i.e., spaces of $\varphi$-types for some formula $\varphi$).
On the other hands, there exist compact topometric spaces where the properties discussed in this section fail, e.g., the one given in \fref{exm:BadCompletion} above, as well as of type spaces with ``exotic'' distances:

\begin{exm}
  In \cite[Example~3.11 \& Theorem~3.15]{BenYaacov:Perturbations} an example was given somewhat indirectly of a compact topometric space in which condition $(*)$ fails (in the terminology used there, the perturbation distance is not \emph{open} or even weakly so).
\end{exm}

\section{An abstract characterisation of continuous $1$-Lipschitz functions}
\label{sec:Abstract1Lipschitz}

A compact Hausdorff topological space $X$ can be recovered, up to a unique homeomorphism, either from $\cC(X)$ viewed as a $C^*$-algebra, or from $\cC(X,\bR)$ viewed as a Banach lattice (see \fref{dfn:BanachLattice} and \fref{fct:Kakutani} below).
Accordingly, we wish to recover a compact topometric space from its space of continuous $1$-Lipschitz functions.
While $\cC_{\cL(1)}(X)$ is not an algebra, $\cC_{\cL(1)}(X,\bR)$ is a lattice, so it is the real version which we prefer to use as a base.
Of course, the lattice $\cC(X,\bR)$ can be recovered from the $C^*$-algebra $\cC(X)$ as the space of self-adjoint elements together with continuous functional calculus applied to $\min$ and $\max$, so there is no real difference to which case we start with.

\begin{lem}[Lattice Stone-Weierstraß for Lipschitz functions]
  \label{lem:LatticeStonrWeierstrassLipschitz}
  Let $X$ be a compact topometric space, $S \subseteq \bR$, and let $A \subseteq \cC_{\cL(1)}(X,S)$ be a sub-lattice, such that for every $x,y \in X$ (possibly equal), $s,t \in S$ and $\varepsilon > 0$ such that $|s-t| \leq d(x,y)$ there is some $f \in A$ with $| f(x) - s | + | f(y) - t | < \varepsilon$.
  Then $A$ is dense in $\cC_{\cL(1)}(X,S)$ in uniform convergence.
\end{lem}
\begin{proof}
  The proof is very similar to the classical argument.
  Indeed, let $h \in \cC_{\cL(1)}(X,\bR)$ and $\varepsilon > 0$.
  By assumption, for every $x,y \in X$ there is $f_{xy} \in A$ such that $| f_{xy}(x) - h(x) | + | f_{xy}(y) - h(y) | < \varepsilon$.
  In particular, $f_{xy}(x) < h(x) + \varepsilon$ and $f_{xy}(y) > h(x) - \varepsilon$.
  Fixing $x$, for each $y$ there is a neighbourhood $V_y$ on which $f_{xy} > h - \varepsilon$, and by compactness there is a finite family $\{y_i\}_{i<n}$ such that $\bigcup_i V_{y_i} = X$.
  Letting $g_x = \bigvee_i f_{x y_i} \in A$ we have $g_x(x) < h(x) + \varepsilon$ and $g_x > h - \varepsilon$ everywhere.
  By a similar argument, there is a finite family $\{x_j\}_{j < m}$ such that $h' = \bigwedge_j g_{x_i} \in A$ satisfies $h - \varepsilon < h' < h + \varepsilon$, as desired.
\end{proof}

\begin{conv}
  When $E$ is a Banach space and also a lattice, by a \emph{convex sub-lattice} $A \subseteq E$ we mean a subset which is convex with respect to the linear structure, and closed with respect to the lattice operations.
  In other words, we interpret the words ``convex'' and ``sub-lattice'' separately from the rest of the structure, so we do not mean that $A$ is a Banach space, nor do we mean that if $f,g \in A$, $h \in E$ and $f \leq h \leq g$ in the lattice order then $h \in A$.
  We say that $A$ is \emph{symmetric} if $A = -A$.
\end{conv}

\begin{thm}
  \label{thm:L1Set}
  Let $X$ be a compact topological space, $A \subseteq \cC(X,\bR)$.
  Then the following are equivalent:
  \begin{enumerate}
  \item There is a topometric structure $(X,d)$ on $X$ such that $A = \cC_{\cL(1)}(X,\bR)$.
  \item The following hold:
    \begin{itemize}
    \item The set $A$ is closed (in norm).
    \item The set $A$ is a symmetric convex sub-lattice of $\cC(X,\bR)$ containing $\bR$.
    \item The set $A$ separates point of $X$.
    \end{itemize}
  \item As above, with the second point replaced with
    \begin{itemize}
    \item The set $A$ is a sub-lattice, closed under translation by $\alpha \in \bR$ and multiplication by $\alpha \in [-1,1]$.
    \end{itemize}
  \end{enumerate}
  In this case $A$ is weakly closed, generates $\cC(X,\bR)$ as a Banach space, and the metric $d$ is unique and can be recovered by
  \begin{gather}
    \label{eq:DistanceFrom1Lipschitz}
    d(x,y) = \sup_{f \in A} |f(x)-f(y)|.
  \end{gather}
\end{thm}
\begin{proof}
  \begin{cycprf*}
  \item
    Easy.
  \item
    Since $0 \in A$ and $A$ is convex, it is closed under multiplication by $\alpha \in [0,1]$, and since $A$ is symmetric, it is closed under multiplication by $\alpha \in [-1,1]$.
    Now let $\alpha \in \bR$ and $f \in A$, and for $t \in [0,1)$ let $f_t = tf + (1-t) \frac{\alpha}{1-t} \in A$.
    Then $f_t \rightarrow f + \alpha$ so $f + \alpha \in A$ as well.
  \item[\impfirst]
    Let us define $d$ by \fref{eq:DistanceFrom1Lipschitz}.
    Since $A$ generates $\cC(X)$, $d$ is a distance, and it is lower semi-continuous as the supremum of continuous functions.
    It follows that for each $x \in X$, the closed balls $\overline B(x,r)$ are closed.
    Thus, if $F$ is a closed set not containing $x$, then by compactness there is some $r > 0$ such that $F \cap B(x,r) = \emptyset$, so $d$ refines the topology.
    Thus $(X,d)$ is a topometric space, and we view it henceforth as such.

    It is immediate from the construction that $A \subseteq \cC_{\cL(1)}(X,\bR)$.
    Now assume that $x,y \in X$, $0 \leq s-t \leq d(x,y)$ and $\varepsilon > 0$.
    By definition of $d$ there is $f \in A$ such that $s-t < |f(x) - f(y)| + \varepsilon$, and possibly multiplying $f$ by some $\alpha \in [-1,1]$ we may assume that $s-t < f(x) - f(y) + \varepsilon < s-t + \varepsilon$.
    Up to translation, we may further have $t = f(y)$, in which case $s < f(x) < s + \varepsilon$.
    By \fref{lem:LatticeStonrWeierstrassLipschitz}, $A = \cC_{\cL(1)}(X,\bR)$.
  \end{cycprf*}
  The identity \fref{eq:DistanceFrom1Lipschitz} follows from Urysohn's Lemma for normal topometric spaces and the fact that a compact topometric space is normal.
  The set $\bigcup_n nA = \cC_\cL(X,\bR)$ is dense in $\cC(X,\bR)$ by \fref{cor:DenseLipschitz}, and it is clear that $\cC_{\cL(1)}(X,\bR)$ is closed in point-wise convergence, which, in $\cC(X)$, coincides with the weak topology.
\end{proof}

This is quite different from \cite[Theorem~4.3.2]{Weaver:LipschitzAlgebras}, which still seems to be the most closely analogous result therein.

If one desires a characterisation of $\cC_{\cL(1)}(X,\bR)$ which does not make any reference to an ambient $\cC(X,\bR)$ or $\cC(X)$, one first requires an abstract characterisation of symmetric convex sub-lattices of Banach lattices.
Convex spaces, i.e., convex subsets of Banach spaces, equipped with the induced metric and convex combination operations $c_t(x,y) = tx + (1-t)y$ for $t \in [0,1]$, are characterised, for example, by Machado \cite{Machado:Convex} (see also \cite{BenYaacov:NakanoSpaces} for a characterisation with a slightly different set of axioms using the convex combination $\half[x+y]$ alone).
Such a space is called \emph{symmetric} when they are equipped with a constant $0$ and a unary operation $-x$ such that $c_{1/2}(x,-x) = 0$ for all $x$.
It then embeds isometrically as a convex symmetric subset of a Banach space, and the generated Banach space is unique up to a unique isomorphism.

The next step is to characterise when a lattice structure on a symmetric convex space comes from a Banach lattice structure on the generated Banach space.
First, let us recall the following from Meyer-Nieberg \cite{MeyerNieberg:BanachLattices}.

\begin{dfn}
  \label{dfn:BanachLattice}
  \begin{enumerate}
  \item An \emph{ordered vector space} is a vector space $(E,\leq)$ over an ordered field $(k,\leq)$ satisfying for all $v,u,w \in E$ and $0 < \alpha \in k$: $v \leq u \Longleftrightarrow v+w \leq u + w \Longleftrightarrow \alpha v \leq \alpha u$.
  \item An ordered vector space is a \emph{vector lattice} (or a \emph{Riesz space}) if it is a lattice, i.e., if every two $v,u \in E$ admit a least upper bound (or \emph{join}) $v\vee u$ and a greatest lower bound (or \emph{meet}) $v\wedge u$.
    In this case we write $|v| = v \vee (-v)$, $v^+ = v \vee 0$, $v^- = (-v) \vee 0$.
  \item \label{item:NormLat}
    A vector lattice over $\bR$ is a \emph{normed vector lattice} it admits a norm satisfying: $|v| \leq |u| \Longrightarrow \|v\| \leq \|u\|$.
  \item A \emph{Banach lattice} is a complete normed vector lattice.
  \end{enumerate}
\end{dfn}

For our purposes, a list of identities will be better.

\begin{lem}
  \label{lem:BanachLattice}
  Let $E$ be a vector lattice.
  Then the following hold:
  \newcounter{enumkeep}
  \begin{enumerate}
  \item \label{item:Lattice}
    $(E,\vee,\wedge)$ is a distributive lattice, i.e., the operations $\vee$ and $\wedge$ are idempotent, commutative, associative, associative over one another, and satisfy the absorption axiom $(v\wedge u)\vee v = (v\vee u)\wedge v = v$.
  \item \label{item:LatticeAddition}
    $(v + w) \vee (u + w) = (v \vee u) + w$.
  \item \label{item:LatticeScaling}
    For scalar $\alpha > 0$: $(\alpha v) \vee (\alpha u) = \alpha(v \vee u)$.
  \item \label{item:LatticeExcludedMiddle}
    $v + u = v \vee u + v \wedge u$.
    \setcounter{enumkeep}{\value{enumi}}
  \end{enumerate}
  If it is a normed vector lattice then it satisfies in addition:
  \begin{enumerate}
    \setcounter{enumi}{\value{enumkeep}}
  \item \label{item:LatticeNorm}
    $\| v \| = \big\| |v| \big\| \leq \bigl\| |v| \vee u \bigr\|$.
  \end{enumerate}

  Conversely, assume $E$ is a vector space equipped in addition with operations $\vee$,$\wedge$ verifying \fref{item:Lattice}-\fref{item:LatticeExcludedMiddle} then $E$ is a vector lattice where the order can be recovered as $v \leq u \Longleftrightarrow v\wedge u = v \Longleftrightarrow v \vee u = u$.

  If it is a normed (Banach) space verifying \fref{item:Lattice}-\fref{item:LatticeNorm} then it is a normed (Banach) lattice.
\end{lem}
\begin{proof}
  The first statements are \cite[Theorem~1.1.1]{MeyerNieberg:BanachLattices} and easy verifications.

  For the first part of the converse we observe that $v \leq u \Longleftrightarrow v\wedge u = v \Longleftrightarrow v \vee u = u$ does indeed define a partial order with respect to which $\vee$ and $\wedge$ are the join and meet, respectively.
  From \fref{item:LatticeAddition},\fref{item:LatticeScaling} it follows that $u \leq v \Longleftrightarrow 0 \leq v-u \Longleftrightarrow 0 \leq \alpha(v-u)$ for $\alpha > 0$, whereby $E$ is a vector lattice.

  For the second part of the converse, if $|v| \leq |u|$ then $\|v\| = \bigl\| |v| \bigr\| \leq \bigl\| |v| \vee |u| \bigr\| = \bigl\| |u| \bigr\| = \|u\|$.
\end{proof}

\begin{lem}
  \label{lem:ConvexSymmetricLattice}
  A complete metric space $A$ together with operations $-,c_t,\vee,\wedge$ (for $t \in [0,1]$) is a symmetric convex sub-lattice of a Banach lattice if and only if:
  \begin{enumerate}
  \item $(A,0,-,c_t)_{t \in [0,1]}$ is a symmetric convex space.
  \item $(A,,\vee,\wedge)$ is a distributive lattice.
  \item For all $u,v,w \in A$ and $t \in [0,1]$:
    \begin{gather*}
      c_t(v,w) \vee c_t(u,w) = c_t(v \vee u, w), \qquad
      c_{1/2}(v \vee u,v \wedge u) = c_{1/2}(v,u), \\
      \| v \| = \big\| |v| \big\| \leq \bigl\| |v| \vee u \bigr\|,
    \end{gather*}
    where $|v| = v \vee (-v)$ as before.
  \end{enumerate}
  Moreover, in this case the Banach lattice can be taken to be the Banach space generated by $A$, on which the lattice structure is uniquely determined.
\end{lem}
\begin{proof}
  Let $E_0$ be the normed linear space generated by $A$, whose elements are all of the form $\alpha v$ for some $\alpha \geq 1$ and $v \in A$, and let $E$ be its completion.
  Substituting $w = 0$ above we obtain $(\alpha v) \vee (\alpha u) = \alpha(v \vee u)$ for $0 \leq \alpha \leq 1$, so we may unambiguously define $v \vee u = \bigl( (v/\alpha) \vee (u/\alpha) \bigr) \alpha$ for any $v,u \in E_0$ and $\alpha > 0$ big enough so that $v/\alpha, u/\alpha \in A$.

  As in the proof of \fref{lem:BanachLattice}, the lattice structure induces an order on $A$ for which $\vee$ and $\wedge$ are the join and meet.
  We also have $v \leq u$ if and only if $c_{1/2}(u,-v) \geq 0$ if and only if $-u \leq -v$, whereby $v \wedge u = -\bigl( (-v) \vee (-u) \bigr)$ on $A$.
  We may then extend $\wedge$ from $A$ to $E_0$ by scaling as well.
  The hypotheses of \fref{lem:BanachLattice} hold in $E_0$ by scaling to $A$, so $E_0$ is a normed lattice, and $E$ carries a unique Banach lattice structure.
\end{proof}

Banach lattices of the form $\cC(X,\bR)$ are characterised as follows.

\begin{fct}[{Kakutani, see Meyer-Nieberg \cite[Theorem~2.1.3]{MeyerNieberg:BanachLattices}}]
  \label{fct:Kakutani}
  Let $E$ be a Banach lattice containing a greatest element of norm at most one, which we call $1$, and let $X = X(E) \subseteq E'$ be the set of extreme positive linear functional of norm one, equipped with the weak$^*$ topology.
  Then $X$ is compact, and $E \cong C(X,\bR)$, with $1$ being sent to the constant function $1$.

  Conversely if $X$ is a compact space, then $E = \cC(X,\bR)$ is as above, and $X(E)$ is canonically homeomorphic to $X$.
\end{fct}

And we conclude:

\begin{thm}
  \label{thm:CompactTopometricLipschitzSpace}
  Let $X$ be a compact topometric space, $A = (A,0,1,-,c_t,\vee,\wedge)_{t \in [0,1]} = \cC_{\cL(1)}(X,\bR)$.
  Then $(A,0,-,c_t,\vee,\wedge)_{t \in [0,1]}$ is a convex symmetric lattice, i.e., it satisfies the hypotheses of \fref{lem:ConvexSymmetricLattice}, and for every $v \in A$ we have
  \begin{gather*}
    \|v\| = \sup \, \bigl\{ (1-t)/t : 0 < t \leq 1, \, c_t(|v|,-1) \leq 0 \bigr\},
  \end{gather*}
  where $\|v\| = d(v,0)$ and $u \leq 0$ means $u \vee 0 = 0$.

  Conversely, every $(A,0,1,-,c_t,\vee,\wedge)_{t \in [0,1]}$ satisfying these properties is isomorphic to $\cC_{\cL(1)}(X,\bR)$ for some compact topometric space $X$, which is moreover unique up to a unique isomorphism.
  Moreover, let $C \subseteq \bR^A$ be the set of all functions $\lambda$ such that $\lambda c_t(u,v) = t\lambda u + (1-\lambda)v$, $\lambda(-v) = -\lambda v$, $\lambda |v| \geq 0$ and $\lambda 1 = 1$.
  Then $C$ is convex and compact, $X$ can be identified with the set of extreme points of $C$, and for $\lambda,\mu \in X$ we have
  \begin{gather*}
    d(\lambda,\mu) = \sup_{v \in A} |\lambda v - \mu v|.
  \end{gather*}
\end{thm}
\begin{proof}
  The main assertion is clear.
  For the converse, we first embed $A$ into the generated Banach lattice $E$ as per \fref{lem:ConvexSymmetricLattice}.
  Then $(E,1)$ satisfies the hypotheses of \fref{fct:Kakutani}, allowing us to recover the compact topological space $X$.
  The topometric structure on $X$ is then recovered by \fref{thm:L1Set}.
  For the moreover part, construct the dual $E^*$ directly as a space of functions on $A$, then unwind the other constructions (for a positive functional $\lambda$ we have $\|\lambda\| = \lambda 1$, so continuity of $\lambda$ comes for free).
\end{proof}

\bibliographystyle{begnac}
\bibliography{begnac}

\end{document}